\newtheorem{theorem}{Theorem}[section]
\newtheorem{lemma}[theorem]{Lemma}
\newtheorem{proposition}[theorem]{Proposition}
\newtheorem{corollary}[theorem]{Corollary}
\newtheorem{definition}{Definition}
\newcommand{\R}{\mathbb{R}}
\newcommand{\E}{\mathbf{E}}
\providecommand{\keywords}[1]{\textit{Keywords:} #1}
\providecommand{\msc}[1]{\textit{AMS MSC 2010:} #1}
\begin{document}

\nocite{*}

\title{Example of a Gaussian self-similar field with stationary rectangular increments that is not a fractional Brownian sheet}

\author{Vitalii  Makogin \\
Department of Probability Theory, Statistics and Actuarial
Mathematics \\
Taras Shevchenko National University of Kyiv \\
Volodymyrska 64, Kyiv 01601, Ukraine, E-mail:makoginv@ukr.net
\and
 Yuliya Mishura \\
Department of Probability Theory, Statistics and Actuarial
Mathematics \\
Taras Shevchenko National University of Kyiv \\
Volodymyrska 64, Kyiv 01601, Ukraine , E-mail:myus@univ.kiev.ua
 }

\maketitle

\begin{abstract}
We consider anisotropic self-similar random fields, in particular, the fractional Brownian sheet. This Gaussian field is an extension of fractional Brownian motion. We prove some properties of covariance function for self-similar fields with rectangular increments. Using Lamperti transformation we obtain properties of covariance function for the corresponding stationary fields. We present an example of a Gaussian self-similar field with stationary rectangular increments that is not a fractional Brownian sheet.
\end{abstract}

\keywords{self-similar random field; fractional Brownian sheet; stationary rectangular increments; covariance function}

\msc{60G60;60G18;60G22 (42A82).}

\section{Introduction}
A real valued random process $\{X(t),t\in \R_+\}, (\R_+=[0,+\infty))$ is called a self-similar process with index $H>0$ if for all $a>0$ $\{X(at),t\in \R_+\}\stackrel{d}{=}\{a^H X(t),t\in \R_+\},$ 
where symbol $\stackrel{d}{=}$ means that the corresponding finite-dimensional distributions coincide. The books by Embrechts \& Maejima \cite{maej} and
Samorodnitsky \& Taqqu \cite{Sam} are devoted to the theory of
self-similar processes.

Square integrable self-similar processes with stationary
increments have very precise form of covariance function (\cite{taqqu81}). Indeed, assume $\E[X(1)]^2<+\infty,$ then
\begin{align}
\label{eq:proc}
\begin{gathered}
\E[X(t)X(s)]=\frac{1}{2}\E\left(X^2(t)+X^2(s)-(X(t)-X(s))^2\right)\\
=\frac{1}{2}\left(\E X^2(t)+\E X^2(s)-\E(X(t-s))^2\right)\\
=\frac{1}{2}\left(t^{2H}+s^{2H}-|t-s|^{2H}\right)\E[X(1)]^2,\quad t,s\in \R_+.
\end{gathered}
\end{align}
It is easy to check that a real valued self-similar random process $\{X(t),t\in \R_+\}$ with index $H>0$ is centered. 
So, all square integrable self-similar processes with stationary increments have the same covariance
function.

It is known that the distribution of a Gaussian process is
determined by its mean and covariance structure. Thus, the formula \eqref{eq:proc} determines a
unique Gaussian process.
\begin{definition}
Let $0<H<1$. A real-valued Gaussian process $\{B_H(t),
t\in \R_+\}$ is called fractional Brownian motion with Hurst index $H$ if $\E[B_H(t)]=0$ and
\[\E[B_H(t)B_H(s)]=\frac{1}{2}\left(t^{2H}+s^{2H}-|t-s|^{2H}\right)\E[B_H(1)]^2, t,s \in \R_+.\]
\end{definition}

It is known that a fractional Brownian motion $\{B_H(t),
t > 0\}$ is a self-similar process with stationary increments. So, fractional Brownian motion is unique in the sense that the class of all fractional Brownian motions coincides
with that of all Gaussian self-similar processes with stationary increments.

In this paper we consider self-similar random fields that are an extension of self-similar stochastic processes. More precisely, we deal with anisotropic self-similar random fields which means that their indexes of self-similarity are different for different coordinates.
\begin{definition}
\label{taq} 
A real valued random field $\{X (\mathbf{t}), \mathbf{t} = (t_1, \dots , t_n) \in \R_+^n\}$ is
self-similar with index $\mathbf{H} = (H_1, \ldots ,H_n) \in (0,+\infty)^n$
if \[\left\{X (a_1t_1, \ldots , a_nt_n), \ \ \mathbf{t} \in \R_+^n\right\} \stackrel{d}{=}\left\{a_1^{H_1}\cdots a_n^{H_n} X (\mathbf{t}), \ \ \mathbf{t} \in \R_+^n \right\},\] for all $a_1 > 0, \ldots , a_n > 0.$
\end{definition}

Interest to anisotropic self-similar random fields is motivated by applications
coming from climatological and environmental sciences (see
\cite{env1,env2}). Several authors have proposed to apply such
random fields for modelling phenomena in spatial statistics,
stochastic hydrology and imaging processing (see
\cite{env3,env4,env5}).

\begin{definition}
\label{fbs} The normalized fractional Brownian sheet with
Hurst index\\ $\mathbf{H}=(H_1,\ldots,H_n), 0<H_i<1, i=\overline{1,n}$
is the centered Gaussian random field
$B_\mathbf{H}=\{B_\mathbf{H}(\mathbf{t}),\mathbf{t} \in \R_+^n\}$
with a covariance function
\[\E(B_\mathbf{H}(\mathbf{t})B_\mathbf{H} (\mathbf{s})) =
2^{-n}\prod_{i=1}^{n}
\left(|t_i|^{2H_i}+|s_i|^{2H_i}-|t_i-s_i|^{2H_i}\right),\quad
\mathbf{t},\mathbf{s} \in \R_+^n. \]
\end{definition}
This field is  self-similar with index $\mathbf{H}=(H_1,\ldots,H_n)$ by Definition~\ref{taq}. 

Further in the paper, we  assume that the fields satisfy the
Definition~\ref{taq}.  
Moreover, we shall consider only the case $n=2$ since switching to the parameter of the higher
dimension is rather technical.

\begin{definition}
Let $X=\{X(\mathbf{t}),\mathbf{t}\in \R^2\}$ be a self-similar field with index $\mathbf{H}=(H_1,H_2)\in (0,+\infty)^2.$
For any $\mathbf{u}=(u_1,u_2) \in \R^2$ and any $\mathbf{v}=(v_1,v_2)\in \R^2$ such that $v_1>u_1,$ $v_2>u_2$ define
\[ \Delta_{\mathbf{u}} X(\mathbf{v}) =  X(v_1,v_2)-X(u_1,v_2)-X(v_1,u_2)+X(u_1,u_2).
\]
The field $X$ admits stationary rectangular increments if for
any $\mathbf{u}=(u_1,u_1) \in \R^2$
\[\{\Delta_{\mathbf{u}} X(\mathbf{u}+\mathbf{h}),\mathbf{h}\in \R^2_+\}\stackrel{d}{=}\{\Delta_{0,0} X(\mathbf{h}),\mathbf{h}\in \R^2_+\}.\]
\end{definition}

The fractional Brownian sheet has stationary rectangular increments. The proof of this property for
the $\R^2$ case can be found in the paper \cite{alp}. A similar
property for the case  $n>2$ can be easily proved as well.

The properties of fractional Brownian sheet and fractional Brownian motion seem to be quite similar. The aim of this paper is an answer to the following question: 

\textit{Is
fractional Brownian sheet unique Gaussian self-similar fields with stationary rectangular increments?} 

The answer is no and we present an example of a Gaussian self-similar field with stationary rectangular increments that is not a fractional Brownian sheet.

Let us mention that the self-similar process has not to be stationary. But there is a one-to-one
correspondence between self-similar and stationary processes. For every self-similar process $X$ with index $H>0$, its Lamperti
transformation $Z= \{Z(t)=t^{-H}X(e^t)\}$ is a stationary process.
The Lamperti transformation for anisotropic random fields was introduced in the paper \cite{taqqu} and there was established
the correspondence between self-similar and stationary random
fields as well. We get necessary and sufficient conditions on covariance function of stationary field for the corresponding self-similar field to have have stationary rectangular increments.

The rest of the paper is organized as follows. 
In Section 2 we prove some properties of covariance function for self-similar fields with rectangular increments.
In Section 3, we present Lamperti transformation of self-similar field and obtain properties of covariance function for the corresponding stationary field. In Section 4 we present an example of a Gaussian self-similar field with stationary rectangular increments that is not a fractional Brownian sheet.

\section{Some properties of self-similar random fields}
Throughout  this section the field  $X=\{X(\mathbf{t}),\mathbf{t}\in\R^2_+\}$ is a self-similar random filed with index $\mathbf{H} =(H_1,H_2),
0<H_1<1,~0<H_2<1$ and with stationary rectangular increments. 
Evidently,  \[\E[X(\mathbf{t})]^2=t_1^{2H_1}t_2^{2H_2}\E[X^2(1,1)].\]
In what follows we need some auxiliary results.
\begin{lemma}
\label{lemma_1}
For all $\mathbf{s},\mathbf{t}\in\R^2_+$ we have
\begin{eqnarray}
\label{lemma_ineq_1}
  \E\left[X(\mathbf{t})-X(s_1,t_2)\right]^2 &=& |t_1-s_1|^{2
H_1}t_2^{2H_2}\E
X^2(1,1), \\
\label{lemma_ineq_2}
  \E\left[X(s_1,t_2)-X(\mathbf{s})\right]^2 &=& |t_2-s_2|^{2
H_2}s_1^{2H_2}\E X^2(1,1).
\end{eqnarray}
\end{lemma}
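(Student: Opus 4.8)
The plan is to reduce the one-dimensional increments appearing in \eqref{lemma_ineq_1} and \eqref{lemma_ineq_2} to genuine rectangular increments having one corner on a coordinate axis, and then to combine the stationary-rectangular-increments property with the moment identity $\E[X(\mathbf t)]^2=t_1^{2H_1}t_2^{2H_2}\E X^2(1,1)$ recorded just before the lemma. No Gaussianity is used, only square-integrability of $X$.

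The preliminary step is to observe that $X$ vanishes on the coordinate axes: applying the moment identity at $\mathbf t=(t_1,0)$ and at $\mathbf t=(0,t_2)$ gives $\E X^2(t_1,0)=\E X^2(0,t_2)=0$, hence $X(t_1,0)=0$ and $X(0,t_2)=0$ almost surely for all $t_1,t_2\ge 0$ (in particular $X(0,0)=0$ a.s.).

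For \eqref{lemma_ineq_1}, assume first $t_1>s_1\ge 0$ and take the rectangle with lower-left corner $\mathbf u=(s_1,0)$ and opposite corner $(t_1,t_2)$. By the axis-vanishing,
\[
\Delta_{(s_1,0)}X(t_1,t_2)=X(t_1,t_2)-X(s_1,t_2)-X(t_1,0)+X(s_1,0)=X(t_1,t_2)-X(s_1,t_2),
\]
so with $\mathbf h=(t_1-s_1,t_2)$ the stationarity of rectangular increments gives $\Delta_{(s_1,0)}X(\mathbf u+\mathbf h)\stackrel d=\Delta_{(0,0)}X(\mathbf h)=X(t_1-s_1,t_2)$ (the last equality again by axis-vanishing). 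Thus $X(t_1,t_2)-X(s_1,t_2)\stackrel d=X(t_1-s_1,t_2)$, and the moment identity gives $\E[X(t_1,t_2)-X(s_1,t_2)]^2=(t_1-s_1)^{2H_1}t_2^{2H_2}\E X^2(1,1)$. The case $s_1>t_1$ follows because $[\,\cdot\,]^2$ is unchanged under $s_1\leftrightarrow t_1$, and $s_1=t_1$ is trivial; together this yields \eqref{lemma_ineq_1}. Identity \eqref{lemma_ineq_2} is obtained in the same way with the rectangle with corners $(0,s_2)$ and $(s_1,t_2)$: axis-vanishing turns $\Delta_{(0,s_2)}X(s_1,t_2)$ into $X(s_1,t_2)-X(s_1,s_2)$, which is distributed as $X(s_1,t_2-s_2)$, so its second moment equals $s_1^{2H_1}|t_2-s_2|^{2H_2}\E X^2(1,1)$ (so the exponent of $s_1$ on the right-hand side of \eqref{lemma_ineq_2} should read $2H_1$).

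The corner bookkeeping is routine; the two points that genuinely need care are (i) the axis-vanishing, which is where the moment identity and square-integrability enter, and (ii) matching each one-dimensional difference with the correct rectangular increment so that the two spurious corner terms cancel. I also apply the stationary-rectangular-increments property with a lower-left corner $\mathbf u$ lying on a coordinate axis, i.e.\ I read the admissibility condition in the definition as $\mathbf u=(u_1,u_2)\in\R_+^2$ with one coordinate possibly equal to $0$.
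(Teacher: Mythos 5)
Your proof is correct and follows essentially the same route as the paper: rewrite each one-dimensional difference as a rectangular increment anchored on a coordinate axis (using that $X$ vanishes on the axes, which the paper obtains from Proposition 2.4.1 of the Genton--Perrin--Taqqu reference and you derive directly from the second-moment identity), then apply stationarity of rectangular increments and self-similarity. Your observation that the exponent of $s_1$ in \eqref{lemma_ineq_2} should be $2H_1$ rather than $2H_2$ is also right; this is a typo in the paper, as confirmed by the way the identity is used in Lemma~\ref{lemma_2}.
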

\begin{proof}
Without loss of generality suppose that $s_1\leq t_1.$ It follows from Proposition 2.4.1 of \cite{taqqu} that for any
$s>0:$ $X(s,0)=X(0,s)=0$ a.s. Then the left-hand  side of (\ref{lemma_ineq_1})  equals to
\[\E\left(X(\mathbf{t})-X(t_1,0)-X(s_1,t_2)+X(s_1,0)\right)^2=\E\left(\Delta_{s_1,0} X(\mathbf{t})\right)^2.\]
Stationarity of the increments implies that
\[\E\left(\Delta_{s_1,0} X(\mathbf{t})\right)^2=\E\left(\Delta_{0,0} X(t_1-s_1,t_2)\right)^2=\E\left(X(t_1-s_1,t_2)\right)^2.\]
Now,  self-similarity implies that
\[\E\left(X(\mathbf{t})-X(s_1,t_2)\right)^2=\E\left(X(t_1-s_1,t_2)\right)^2=|t_1-s_1|^{2
H_1}t_2^{2H_2}\E X^2(1,1).\] The proof of equality (\ref{lemma_ineq_2}) is
similar.
\end{proof}

\begin{lemma}
\label{lemma_2}
For all $\mathbf{s},\mathbf{t}\in\R^2_+$ we have
\begin{eqnarray}
\label{lemma_ineq_3}
  \E\left[X(\mathbf{t})X(s_1,t_2)\right] &= \frac{1}{2}t_2^{2H_2}\left(t_1^{2H_1}+s_1^{2H_1}-|t_1-s_1|^{2
H_1}\right)\E X^2(1,1), \\
\label{lemma_ineq_4}
  \E\left[X(s_1,t_2)X(\mathbf{s})\right] &= \frac{1}{2}s_1^{2H_1}\left(t_2^{2H_2}+s_2^{2H_2}-|t_2-s_2|^{2
H_2}\right)\E X^2(1,1).
\end{eqnarray}
\end{lemma}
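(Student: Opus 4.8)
The plan is to reduce both identities to the elementary polarization identity
\[
\E[UV]=\frac12\left(\E U^2+\E V^2-\E(U-V)^2\right),
\]
which is a purely algebraic fact and, in particular, needs no centering assumption. For \eqref{lemma_ineq_3} I would apply it with $U=X(\mathbf t)=X(t_1,t_2)$ and $V=X(s_1,t_2)$, so that $U-V=X(\mathbf t)-X(s_1,t_2)$. The three expectations on the right are then already available: $\E U^2=t_1^{2H_1}t_2^{2H_2}\E X^2(1,1)$ and $\E V^2=s_1^{2H_1}t_2^{2H_2}\E X^2(1,1)$ by the self-similarity computation recorded just before Lemma~\ref{lemma_1}, while $\E(U-V)^2=|t_1-s_1|^{2H_1}t_2^{2H_2}\E X^2(1,1)$ is precisely \eqref{lemma_ineq_1}. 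Substituting these and pulling the common factor $t_2^{2H_2}\E X^2(1,1)$ out of the bracket yields \eqref{lemma_ineq_3}.

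For \eqref{lemma_ineq_4} I would proceed symmetrically, now freezing the first coordinate rather than the second: take $U=X(s_1,t_2)$ and $V=X(\mathbf s)=X(s_1,s_2)$, use $\E U^2=s_1^{2H_1}t_2^{2H_2}\E X^2(1,1)$ and $\E V^2=s_1^{2H_1}s_2^{2H_2}\E X^2(1,1)$ together with $\E(U-V)^2=|t_2-s_2|^{2H_2}s_1^{2H_1}\E X^2(1,1)$ from \eqref{lemma_ineq_2}, and then factor out $s_1^{2H_1}\E X^2(1,1)$ to obtain the stated expression.

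There is essentially no obstacle here: the role of Lemma~\ref{lemma_1} was exactly to provide the variances of the two relevant increments, and once those are in hand the result is immediate from polarization. The only point requiring attention is the bookkeeping — keeping straight which coordinate is held fixed in each of the two identities, and not interchanging the self-similarity exponents $H_1$ and $H_2$ — but no genuine case analysis or estimate is needed; the assumption $s_1\le t_1$ (respectively $s_2\le t_2$) used in Lemma~\ref{lemma_1} is harmless since it is absorbed into the absolute values in the final formulas.
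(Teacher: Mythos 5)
Your proposal is correct and is essentially identical to the paper's own proof: the authors also apply the polarization identity $\E[UV]=\tfrac12\left(\E U^2+\E V^2-\E(U-V)^2\right)$ with the same choices of $U$ and $V$, plugging in the self-similarity variances and the increment variances from Lemma~\ref{lemma_1}. No difference in substance, so nothing further to add.
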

\begin{proof}
It follows from Lemma \ref{lemma_1}   that
\begin{align*}
\E\left[X(\mathbf{t})X(s_1,t_2)\right] &= \frac{1}{2}\left(\E X^2(\mathbf{t})+\E X^2(s_1,t_2)-\E\left[X(\mathbf{t})-X(s_1,t_2)\right]^2\right)\\
&=\frac{1}{2}\left(t_1^{2H_1}t_2^{2H_2}+s_1^{2H_1}t_2^{2H_2}-|t_1-s_1|^{2
H_1}t_2^{2H_2}\right)\E X^2(1,1).
\end{align*}
Similarly  we obtain that
\begin{align*}
\E\left[X(s_1,t_2)X(\mathbf{s})\right] &= \frac{1}{2}\left(\E X^2(\mathbf{s})+\E X^2(s_1,t_2)-\E\left[X(s_1,t_2)-X(\mathbf{s})\right]^2\right)\\
&=\frac{1}{2}\left(s_1^{2H_1}s_2^{2H_2}+s_1^{2H_1}t_2^{2H_2}-|s_2-t_2|^{2
H_2}s_1^{2H_1}\right)\E X^2(1,1).
\end{align*}

\end{proof}

\begin{lemma}
\label{lemma_3}
For all $\mathbf{s},\mathbf{t}\in\R^2_+$ we have
\begin{align}
\label{lemma_ineq_5}
\begin{gathered}
  \E\left[X(\mathbf{t})X(\mathbf{s})\right]+\E\left[X(t_1,s_2)X(s_1,t_2)\right] \\
  =  \frac12 \prod_{i=1,2}\left(t_i^{2H_i}+s_i^{2H_i}-|t_i -
s_i|^{2H_i} \right)\E X^2(1,1).
\end{gathered}
\end{align}
\end{lemma}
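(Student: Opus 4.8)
The plan is to square the rectangular increment $\Delta_{\mathbf{s}}X(\mathbf{t})$, expand, and then solve for the combination of covariances appearing on the left-hand side of \eqref{lemma_ineq_5}. First I would observe that both sides of \eqref{lemma_ineq_5} are unchanged if we interchange $t_1$ with $s_1$, or $t_2$ with $s_2$: on the right because $|t_i-s_i|=|s_i-t_i|$, and on the left because such an interchange only permutes the four points $(t_1,t_2),(s_1,s_2),(t_1,s_2),(s_1,t_2)$, hence permutes the two summands $\E[X(\mathbf{t})X(\mathbf{s})]$ and $\E[X(t_1,s_2)X(s_1,t_2)]$ among themselves. So we may assume $s_1\le t_1$ and $s_2\le t_2$; then $\Delta_{\mathbf{s}}X(\mathbf{t})$ is well defined, and, arguing exactly as in the proof of Lemma~\ref{lemma_1} (stationarity of rectangular increments, then self-similarity, together with $X(u,0)=X(0,u)=0$ a.s.),
\[
\E\left(\Delta_{\mathbf{s}}X(\mathbf{t})\right)^2=\E\left(X(t_1-s_1,t_2-s_2)\right)^2=|t_1-s_1|^{2H_1}|t_2-s_2|^{2H_2}\,\E X^2(1,1).
\]

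Next I would expand $\E\left(X(\mathbf{t})-X(s_1,t_2)-X(t_1,s_2)+X(\mathbf{s})\right)^2$ into four second moments and six mixed moments. The four second moments are instances of $\E X^2(\mathbf{u})=u_1^{2H_1}u_2^{2H_2}\E X^2(1,1)$. Of the mixed moments, $\E[X(\mathbf{t})X(s_1,t_2)]$ and $\E[X(t_1,s_2)X(\mathbf{s})]=\E[X(t_1,s_2)X(s_1,s_2)]$ are covered by \eqref{lemma_ineq_3} (the second one with common second coordinate $s_2$), $\E[X(s_1,t_2)X(\mathbf{s})]$ is \eqref{lemma_ineq_4}, and $\E[X(\mathbf{t})X(t_1,s_2)]$ is the coordinate-interchanged analogue of \eqref{lemma_ineq_3} (which follows in one line from $X(\mathbf{t})-X(t_1,s_2)=\Delta_{0,s_2}X(\mathbf{t})$ by stationarity and self-similarity, just as in Lemmas~\ref{lemma_1}--\ref{lemma_2}). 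The only two mixed moments that remain are exactly $\E[X(\mathbf{t})X(\mathbf{s})]$ and $\E[X(t_1,s_2)X(s_1,t_2)]$, and they occur in the expansion with the same coefficient $2$ (from the products $X(\mathbf{t})\cdot X(\mathbf{s})$ and $(-X(s_1,t_2))\cdot(-X(t_1,s_2))$). Hence the displayed identity becomes a linear equation whose only unknown is the sum $\E[X(\mathbf{t})X(\mathbf{s})]+\E[X(t_1,s_2)X(s_1,t_2)]$, and solving it expresses this sum through quantities that are explicit polynomials in $t_i^{2H_i}$, $s_i^{2H_i}$, $|t_i-s_i|^{2H_i}$ times $\E X^2(1,1)$.

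Finally I would simplify. Writing $a=t_1^{2H_1}$, $b=s_1^{2H_1}$, $p=|t_1-s_1|^{2H_1}$ and $a'=t_2^{2H_2}$, $b'=s_2^{2H_2}$, $p'=|t_2-s_2|^{2H_2}$, collecting the terms shows that, after division by $\E X^2(1,1)$, the expression obtained for $\E[X(\mathbf{t})X(\mathbf{s})]+\E[X(t_1,s_2)X(s_1,t_2)]$ equals $\tfrac12$ times $aa'+ab'+ba'+bb'+pp'-pa'-pb'-ap'-bp'$, which factors as $\tfrac12(a+b-p)(a'+b'-p')$; this is precisely \eqref{lemma_ineq_5}. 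I do not expect a genuine obstacle: once the interchange symmetry is used to reduce to $s_i\le t_i$, the argument is pure bookkeeping, and the only points requiring a little care are (i) avoiding a sign slip or miscounted coefficient in the ten-term expansion, and (ii) making sure the coordinate-interchanged version of Lemma~\ref{lemma_2} is either invoked as "obvious by symmetry" or supplied with its own one-line proof.
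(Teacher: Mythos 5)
Your proposal is correct and follows essentially the same route as the paper: expand $\E\left(\Delta_{\mathbf{s}}X(\mathbf{t})\right)^2$, evaluate it as $|t_1-s_1|^{2H_1}|t_2-s_2|^{2H_2}\E X^2(1,1)$ via stationarity of rectangular increments and self-similarity, substitute the covariances from Lemmas~\ref{lemma_1}--\ref{lemma_2}, and solve for the sum $\E[X(\mathbf{t})X(\mathbf{s})]+\E[X(t_1,s_2)X(s_1,t_2)]$; the only cosmetic differences are that you expand fully into ten terms where the paper groups two pairs as squared differences, and you dispose of the ordering of $s_i,t_i$ by a symmetry remark at the start rather than by cases at the end. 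Note also that the ``coordinate-interchanged analogue'' you worry about, $\E[X(\mathbf{t})X(t_1,s_2)]$, needs no separate argument: it is \eqref{lemma_ineq_4} applied with $s_1$ replaced by $t_1$, since Lemma~\ref{lemma_2} holds for all $\mathbf{s},\mathbf{t}\in\R^2_+$.
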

\begin{proof}
Let $s_1\leq t_1,$ $s_2\leq t_2.$
By stationarity of increments, we have
\[\E\left(\Delta_{\mathbf{s}} X(\mathbf{t})\right)^2=\E\left(\Delta_{0,0} X(\mathbf{t}-\mathbf{s})\right)^2=\E X^2(\mathbf{t}-\mathbf{s}).\]
It follows from definition of rectangular increments that
\begin{equation*}\begin{gathered}
\E X^2(\mathbf{t}-\mathbf{s})=\E\left(\Delta_{\mathbf{s}} X(\mathbf{t})\right)^2=\E\left(X(\mathbf{t})-X(t_1,s_2)-X(s_1,t_2)+X(\mathbf{s})\right)^2\\
=\E\left[X(\mathbf{t})-X(t_1,s_2)\right]^2+\E\left[X(s_1,t_2)-X(\mathbf{s})\right]^2
+2\E\left[X(\mathbf{t})X(\mathbf{s})\right]\\+2\E\left[X(t_1,s_2)X(s_1,t_2)\right]
-2 \E\left[X(\mathbf{t})X(s_1,t_2)\right]-2 \E\left[X(\mathbf{s})X(t_1,s_2)\right].\end{gathered}\end{equation*}
From Lemmas \ref{lemma_1} and \ref{lemma_2} we immediately get  (\ref{lemma_ineq_5}).

In the case $s_1\geq t_1,$ $s_2\geq t_2$ the proof is similar, and in the case
 $s_1\geq t_1,$ $s_2\leq t_2$ we only replace $s_1$ with $t_1$ and vice versa. Lemma is proved.

\end{proof}

\section{Lamperti Transformation of Self-Similar Fields}
Let $X=\{X(\mathbf{t}),\mathbf{t}\in\R^2_+\}$ be a self-similar random filed with index $\mathbf{H}=(H_1,H_2),
0<H_1<1,~0<H_2<1.$
Introduce the Lamperti representation of $X$ that has the form
\begin{equation}
\label{lamp}
X (\mathbf{t}) = t_1^{H_1}t_2^{H_2} Y \left(\ln t_1,\ln t_2 \right),\ \mathbf{t}\in \R^2_+,
\end{equation}
where $Y=\{Y(\mathbf{t}),\mathbf{t}\in \R^2\}$ is a new random field.
It follows from Proposition 2.1.1 of \cite{taqqu} that $Y$ is zero  mean strictly stationary field, i.e.
\[(Y(\mathbf{t}^1+\mathbf{h}),\ldots,Y(\mathbf{t}^n+\mathbf{h}))=(e^{-H_1 t_1^1-H_1 h_1}e^{-H_2 t_2^1-H_2 h_2}X(e^{t_1^1}e^{h_1},e^{t_2^1}e^{h_2}),\ldots,\]
\[e^{-H_1 t_1^n-H_1 h_1}e^{-H_2 t_2^n-H_2 h_2}X(e^{t_1^n}e^{h_1},e^{t_2^n}e^{h_2}))\stackrel{d}{=}(e^{-H_1 t_1^1}e^{-H_2 t_2^1}X(e^{t_1^1},e^{t_2^1}),\ldots,\]
\[e^{-H_1 t_1^n}e^{-H_2 t_2^n}X(e^{t_1^n},e^{t_2^n}))=(Y(\mathbf{t}^1),\ldots,Y(\mathbf{t}^n)).\]
Denote its  covariance function
\begin{equation}
\label{rr}
R(\mathbf{v})=\E[Y (\mathbf{t})Y (\mathbf{t}+\mathbf{v})],\ \mathbf{t},\mathbf{v} \in\R^2.
\end{equation}

Introduce the following notations. Let
\begin{equation}
\label{fh}
F_H(v)=e^{H v}+e^{-H v}-\left|e^{v/2}-e^{-v/2}\right|^{2 H}=2\cosh {(Hv)}-\left|2\sinh(v/2)\right|^{2H},\ v\in \R ,
\end{equation}
and
\[R_0(\mathbf{v})=\prod_{i=1,2}\left(\cosh(H_i v_i)-2^{(2 H_i -1)}\left|\sinh(v_i/2)\right|^{2 H_i}\right)=\frac{1}{4}\prod_{i=1,2}F_{H_i}(v_i),\ \mathbf{v} \in \R^2,\]
where $H, H_1,  H_2\in(0,1).$ Note that for fractional Brownian sheet $B_{H_1,H_2}$ the corresponding stationary filed has covariance function $R_0.$
From now on we assume that  $\E X^2(1,1)=1.$
\begin{proposition}
\label{prop1}
Let $X=\{X(\mathbf{t}), \mathbf{t}\in\R^2_+\}$ be a self-similar random field with index $\mathbf{H}=(H_1,H_2)$
and  $R$ be a covariance function (\ref{rr}) of a stationary field $Y$ in Lamperti transformation of X. If the field $X$ has stationary rectangular increments then
\begin{equation}
\label{r1}
R(\mathbf{v})+R(v_1,-v_2)=\frac{1}{2}F_{H_1}(v_1)F_{H_2}(v_2)=2R_0(\mathbf{v}), \mathbf{v}\in\R^2.
\end{equation}
\end{proposition}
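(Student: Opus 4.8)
The plan is to translate the statement of Lemma~\ref{lemma_3} through the Lamperti representation \eqref{lamp}. Write $\mathbf{t}=(e^{a_1},e^{a_2})$ and $\mathbf{s}=(e^{b_1},e^{b_2})$ with $\mathbf a,\mathbf b\in\R^2$, and set $v_i=a_i-b_i$. Then $X(\mathbf t)=e^{H_1 a_1+H_2 a_2}Y(\mathbf a)$, and similarly for the three other corner points $X(t_1,s_2)$, $X(s_1,t_2)$, $X(\mathbf s)$, whose arguments are $(a_1,b_2)$, $(b_1,a_2)$, $(b_1,b_2)$ respectively. Substituting into the left-hand side of \eqref{lemma_ineq_5}, the common factor $e^{H_1(a_1+b_1)+H_2(a_2+b_2)}$ pulls out of both terms:
\begin{align*}
\E[X(\mathbf t)X(\mathbf s)]+\E[X(t_1,s_2)X(s_1,t_2)]
&= e^{H_1(a_1+b_1)+H_2(a_2+b_2)}\bigl(\E[Y(\mathbf a)Y(\mathbf b)]+\E[Y(a_1,b_2)Y(b_1,a_2)]\bigr)\\
&= e^{H_1(a_1+b_1)+H_2(a_2+b_2)}\bigl(R(v_1,v_2)+R(v_1,-v_2)\bigr),
\end{align*}
using stationarity of $Y$ (so $\E[Y(\mathbf a)Y(\mathbf b)]=R(\mathbf a-\mathbf b)=R(v_1,v_2)$ and $\E[Y(a_1,b_2)Y(b_1,a_2)]=R(a_1-b_1,b_2-a_2)=R(v_1,-v_2)$).

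Next I would compute the right-hand side of \eqref{lemma_ineq_5} in the same variables, recalling $\E X^2(1,1)=1$. For each $i$ we have $t_i^{2H_i}+s_i^{2H_i}-|t_i-s_i|^{2H_i}=e^{2H_i a_i}+e^{2H_i b_i}-|e^{a_i}-e^{b_i}|^{2H_i}$. Factoring out $e^{H_i(a_i+b_i)}$ from the $i$-th bracket gives
\[
e^{2H_i a_i}+e^{2H_i b_i}-|e^{a_i}-e^{b_i}|^{2H_i}
= e^{H_i(a_i+b_i)}\Bigl(e^{H_i v_i}+e^{-H_i v_i}-\bigl|e^{v_i/2}-e^{-v_i/2}\bigr|^{2H_i}\Bigr)
= e^{H_i(a_i+b_i)}F_{H_i}(v_i),
\]
where the middle step uses $|e^{a_i}-e^{b_i}|=e^{(a_i+b_i)/2}|e^{v_i/2}-e^{-v_i/2}|$ and the definition \eqref{fh} of $F_{H}$. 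Taking the product over $i=1,2$ and multiplying by $\tfrac12$, the right-hand side of \eqref{lemma_ineq_5} equals $e^{H_1(a_1+b_1)+H_2(a_2+b_2)}\cdot\tfrac12 F_{H_1}(v_1)F_{H_2}(v_2)$. The exponential prefactor matches the one obtained on the left-hand side, so it cancels, leaving exactly $R(\mathbf v)+R(v_1,-v_2)=\tfrac12 F_{H_1}(v_1)F_{H_2}(v_2)$; the identification with $2R_0(\mathbf v)$ is immediate from the definition of $R_0$. Since $v_1=a_1-b_1$ and $v_2=a_2-b_2$ range over all of $\R^2$ as $\mathbf a,\mathbf b$ do, this holds for every $\mathbf v\in\R^2$.

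There is no serious obstacle here; the whole argument is bookkeeping. The one point that needs a little care is the handling of the absolute values / the sign of $v_i$: Lemma~\ref{lemma_3} was proved assuming an ordering of the coordinates (e.g. $s_i\le t_i$), but the conclusion \eqref{lemma_ineq_5} is symmetric in $\mathbf t$ and $\mathbf s$ and in $(t_1,s_2)\leftrightarrow(s_1,t_2)$, and the functions $F_{H_i}$ and $R_0$ are even in each $v_i$, so the identity extends to all $\mathbf v\in\R^2$ regardless of signs. I would also note explicitly that $R(v_1,-v_2)$ is the right quantity: swapping $a_2$ and $b_2$ in the pair $(a_1,b_2),(b_1,a_2)$ changes the second coordinate difference from $v_2$ to $-v_2$ while leaving the first as $v_1$, which is why the second term on the left of \eqref{r1} is $R(v_1,-v_2)$ rather than $R(-v_1,v_2)$ or $R(\mathbf v)$ again.
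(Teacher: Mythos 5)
Your proof is correct and follows essentially the same route as the paper: substitute the Lamperti representation into Lemma~\ref{lemma_3}, use stationarity of $Y$ to express the two covariances as $R(v_1,v_2)+R(v_1,-v_2)$, and factor out the power (exponential) prefactor from each bracket on the right-hand side to recognize $F_{H_1}(v_1)F_{H_2}(v_2)$; the only difference is notational (you parametrize $t_i=e^{a_i}$, $s_i=e^{b_i}$ from the outset, while the paper works with $t_i,s_i$ and sets $v_i=\ln(t_i/s_i)$ at the end). Your added remarks on the sign of $v_2$ and the evenness of $F_{H_i}$ are sound and, if anything, slightly more explicit than the paper's own argument.
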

\begin{proof}
It follows from the definition \eqref{lamp} of Lamperti transformation  that
\begin{equation}
\label{eq_2}
\begin{gathered}
\E\left[X (\mathbf{t})X (\mathbf{s})\right]=t_1^{H_1}s_1^{H_1}t_2^{H_2}t_2^{H_2}\E\left[Y \left(\ln t_1 ,\ln t_2 \right)Y \left(\ln s_1 ,\ln s_2 \right)\right]\\
=t_1^{H_1}s_1^{H_1}t_2^{H_2}t_2^{H_2}R\left(\ln \frac {t_1}{s_1},\ln \frac{t_2}{s_2}\right).
\end{gathered}
\end{equation}
So
\begin{equation}
\label{eq_3}
\begin{gathered}
\E\left[X (\mathbf{t})X (\mathbf{s})\right]+\E\left[X (s_1,t_2)X (t_1,s_2)\right]\\
=t_1^{H_1}s_1^{H_1}t_2^{H_2}t_2^{H_2}\left(\E\left[Y \left(\ln t_1 ,\ln t_2 \right)Y \left(\ln s_1 ,\ln s_2 \right)\right]+\E\left[Y \left(\ln s_1,\ln t_2 \right)Y \left(\ln t_1,\ln s_2 \right)\right]\right)\\
=t_1^{H_1}s_1^{H_1}t_2^{H_2}t_2^{H_2}\left(R\left(\ln t_1 -\ln s_1,\ln t_2 -\ln s_2\right)+R\left(\ln t_1 -\ln s_1,\ln s_2 -\ln t_2\right)\right).
\end{gathered}
\end{equation}

It follows immediately from Lemma \ref{lemma_3} and (\ref{eq_3}) that
\[R\left(\ln \frac {t_1}{s_1},\ln \frac{t_2}{s_2}\right)+R\left(\ln \frac{t_1}{s_1},-\ln \frac{t_2}{s_2}\right)=\frac12 \prod_{i=1,2}t_i^{-H_i}s_i^{-H_i}\left(|t_i|^{2H_i}+|s_i|^{2H_i}-|t_i -
s_i|^{2H_i} \right)\]
\[=\frac12 \prod_{i=1,2}\left(\left(\frac{t_i}{s_i}\right)^{H_i}+\left(\frac{s_i}{t_i}\right)^{H_i}-\left|\frac{t_i}{s_i} -
\frac{s_i}{t_i}\right|^{2H_i} \right).\]
Hence,
\[R(\mathbf{v})+R(v_1,-v_2)=\frac12 \prod_{i=1,2}\left(e^{H_i v_i}+e^{-H_i v_i}-\left|e^{v_i/2}-e^{-v_i/2}\right|^{2 H_i}\right)=\frac{1}{2}F_{H_1}(v_1)F_{H_2}(v_2).\]
\end{proof}
\begin{corollary}
$R(\mathbf{v})=R(v_1,-v_2)$ for all $\mathbf{v}\in\R^2$ if and only if $R=R_0.$
\end{corollary}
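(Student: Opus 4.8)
The plan is to derive everything from the identity \eqref{r1} established in Proposition~\ref{prop1}, namely
$R(\mathbf{v})+R(v_1,-v_2)=2R_0(\mathbf{v})$ for all $\mathbf{v}\in\R^2$, together with the elementary observation that $R_0$ is an even function of each coordinate separately.

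First I would treat the ``only if'' direction. Suppose $R(\mathbf{v})=R(v_1,-v_2)$ for all $\mathbf{v}$. Substituting this into \eqref{r1} turns the left-hand side into $2R(\mathbf{v})$, so $2R(\mathbf{v})=2R_0(\mathbf{v})$, whence $R(\mathbf{v})=R_0(\mathbf{v})$ for every $\mathbf{v}\in\R^2$, i.e. $R=R_0$.

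For the ``if'' direction, assume $R=R_0$. It suffices to check that $R_0$ itself satisfies $R_0(\mathbf{v})=R_0(v_1,-v_2)$. Looking at the defining formula
$R_0(\mathbf{v})=\prod_{i=1,2}\bigl(\cosh(H_i v_i)-2^{2H_i-1}|\sinh(v_i/2)|^{2H_i}\bigr)$,
each factor depends on $v_i$ only through $\cosh(H_i v_i)$ and $|\sinh(v_i/2)|$, both of which are even in $v_i$; equivalently, $F_{H_i}$ as defined in \eqref{fh} is even. Hence replacing $v_2$ by $-v_2$ leaves the second factor unchanged and the first factor untouched, so $R_0(v_1,-v_2)=R_0(\mathbf{v})$. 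Combining the two directions gives the claimed equivalence.

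There is essentially no obstacle here: the content of the corollary is entirely carried by Proposition~\ref{prop1}, and the only extra input is the trivial evenness of $\cosh$ and $|\sinh|$. The one thing to be mildly careful about is to phrase the backward implication via the evenness of $R_0$ rather than of $R$, since a priori $R$ need not be even in $v_2$ — that symmetry of $R$ is exactly what is being characterized.
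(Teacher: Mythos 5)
Your argument is correct and is exactly the intended one: the paper states this corollary without proof as an immediate consequence of Proposition~\ref{prop1}, and your two directions (substituting the symmetry into \eqref{r1}, and the evenness of each factor $F_{H_i}$ for the converse) supply precisely the routine verification being left to the reader. No gaps.
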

\begin{proposition}
\label{rem1}
Let $Y$ be a stationary field, whose covariance function (\ref{rr}) satisfies the equality (\ref{r1}).
Let $X$ be defined via (\ref{lamp}). Then $X$ is self-similar and
\[\E\left(\Delta_{\mathbf{s}} X(\mathbf{t})\right)^2=(t_1-s_1)^{2
H_1}(t_2-s_2)^{2
H_2}=\E\left(\Delta_{0,0} X(\mathbf{t}-\mathbf{s})\right)^2, 0\leq s_1\leq t_1,0\leq s_2\leq t_2.\]
\end{proposition}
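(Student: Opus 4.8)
The plan is to reduce the assertion to the functional equation \eqref{r1} for $R$, together with two elementary facts about covariance functions of stationary fields.

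First, the self-similarity of $X$ does not use \eqref{r1}. By \eqref{lamp},
\[
X(a_1t_1,a_2t_2)=a_1^{H_1}a_2^{H_2}\,t_1^{H_1}t_2^{H_2}\,Y(\ln t_1+\ln a_1,\ln t_2+\ln a_2),
\]
so the stationarity of $Y$ immediately yields $\{X(a_1t_1,a_2t_2),\mathbf t\in\R^2_+\}\stackrel{d}{=}\{a_1^{H_1}a_2^{H_2}X(\mathbf t),\mathbf t\in\R^2_+\}$ for all $a_1,a_2>0$ (this is the Lamperti correspondence, cf. \cite{taqqu}). In particular $\E X^2(\mathbf t)=t_1^{2H_1}t_2^{2H_2}R(0,0)$; setting $\mathbf v=\mathbf 0$ in \eqref{r1} and using $F_{H}(0)=2$ gives $R(0,0)=1$, whence $\E(\Delta_{0,0}X(\mathbf t-\mathbf s))^2=\E X^2(t_1-s_1,t_2-s_2)=(t_1-s_1)^{2H_1}(t_2-s_2)^{2H_2}$. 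This already settles the rightmost equality of the claimed chain, and reduces the proposition to computing $\E(\Delta_{\mathbf s}X(\mathbf t))^2$.

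For this I would first extract from \eqref{r1} three evaluations of $R$: taking $v_2=0$ (resp.\ $v_1=0$) and using $F_{H}(0)=2$ gives $R(v_1,0)=\tfrac12 F_{H_1}(v_1)$ and $R(0,v_2)=\tfrac12 F_{H_2}(v_2)$; and since $R$ is the covariance function of a real stationary field it is even, $R(\mathbf v)=R(-\mathbf v)$, so $R(-v_1,v_2)=R(v_1,-v_2)$ and therefore \eqref{r1} also reads $R(v_1,v_2)+R(-v_1,v_2)=\tfrac12 F_{H_1}(v_1)F_{H_2}(v_2)$. One may assume $0<s_i\le t_i$ (if some $s_i=0$ the field vanishes on that axis by \eqref{lamp}, so $\Delta_{\mathbf s}X(\mathbf t)=X(\mathbf t)=X(\mathbf t-\mathbf s)$ and the identity is trivial), and write $v_i=\ln(t_i/s_i)\ge 0$. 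Expanding
\[
\E\left(\Delta_{\mathbf s}X(\mathbf t)\right)^2=\E\left(X(t_1,t_2)-X(s_1,t_2)-X(t_1,s_2)+X(s_1,s_2)\right)^2
\]
into its four second moments and six cross terms, rewriting each summand through $R$ by the Lamperti identity \eqref{eq_2} (so that it becomes a monomial in $t_i,s_i$ times one of $R(0,0)$, $R(v_i,0)$, $R(0,v_i)$, $R(v_1,v_2)$, $R(-v_1,v_2)$), then inserting the four evaluations above and using the elementary identity $t_i^{H_i}s_i^{H_i}F_{H_i}(\ln(t_i/s_i))=t_i^{2H_i}+s_i^{2H_i}-(t_i-s_i)^{2H_i}$, the whole expression (with the shorthand $P=t_1^{2H_1}+s_1^{2H_1}$, $Q=t_2^{2H_2}+s_2^{2H_2}$, $p=(t_1-s_1)^{2H_1}$, $q=(t_2-s_2)^{2H_2}$) collapses to
\[
PQ-Q(P-p)-P(Q-q)+(P-p)(Q-q)=pq=(t_1-s_1)^{2H_1}(t_2-s_2)^{2H_2}.
\]

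The computation is routine once the three evaluations of $R$ are in hand; the only genuinely substantive point is the use of the evenness of $R$ to pass from \eqref{r1}, which couples $R(v_1,v_2)$ with $R(v_1,-v_2)$, to the relation coupling $R(v_1,v_2)$ with $R(-v_1,v_2)$ — this controls the term $2\,\E[X(s_1,t_2)X(t_1,s_2)]$, the mixed cross term that already appeared in Lemma \ref{lemma_3}. The remaining effort is the bookkeeping of the ten signed terms and checking the cancellation displayed above.
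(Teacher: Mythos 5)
Your proof is correct and follows essentially the same route as the paper: self-similarity is read off from \eqref{lamp}, the evaluations $R(0,0)=1$, $R(v,0)=\tfrac12 F_{H_1}(v)$, $R(0,v)=\tfrac12 F_{H_2}(v)$ come from \eqref{r1}, and the ten-term expansion collapses via $t^{H}s^{H}F_H(\ln(t/s))=t^{2H}+s^{2H}-(t-s)^{2H}$, the paper merely sidestepping your evenness remark by writing the mixed cross term directly as $R\left(\ln\frac{t_1}{s_1},-\ln\frac{t_2}{s_2}\right)$ so that \eqref{r1} applies verbatim. One small caveat: your parenthetical on the boundary case is inaccurate when only one coordinate vanishes, e.g.\ for $s_1=0<s_2$ one has $\Delta_{\mathbf{s}}X(\mathbf{t})=X(t_1,t_2)-X(t_1,s_2)\neq X(\mathbf{t})$, so that case is not literally trivial but follows from a shorter version of the same computation (as the paper also notes in passing).
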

\begin{proof}
Self-similarity of $X$ follows immediately from (\ref{lamp}).
From (\ref{r1}) we have that
\[R(0,v)=\frac{1}{4}F_{H_1}(0)F_{H_2}(v)=\frac{1}{2}F_{H_2}(v),\quad R(v,0)=\frac{1}{2}F_{H_1}(v).\]
Furthermore, we have the following evident equality
\begin{equation}
\label{tt_1}
\begin{gathered}
\E\left(\Delta_{\mathbf{s}} X(\mathbf{t})\right)^2=\E\left(X(\mathbf{t})-X(t_1,s_2)-X(s_1,t_2)+X(\mathbf{s})\right)^2\\
=\E X^2(\mathbf{t})+\E X^2(t_1,s_2)+\E X^2(s_1,t_2)+\E X^2(\mathbf{s})\\
-2 \E\left[X(\mathbf{t})X(t_1,s_2)\right]-2 \E\left[X(\mathbf{t})X(s_1,t_2)\right]
-2 \E\left[X(t_1,s_2)X(\mathbf{s})\right]\\-2 \E\left[X(s_1,t_2)X(\mathbf{s})\right]
+2\E\left[X(\mathbf{t})X(\mathbf{s})\right]+2\E\left[X(t_1,s_2)X(s_1,t_2)\right].
\end{gathered}
\end{equation}

Let $s_i>0$ (for $s_i=0$ proof is similar but more simple). It follows from (\ref{eq_2}) that the right- hand side of (\ref{tt_1}) equals to
\[t_1^{2H_1}t_2^{2H_2}+t_1^{2H_1}s_2^{2H_2}+s_1^{2H_1}t_2^{2H_2}+s_1^{2H_1}s_2^{2H_2}\]
\[-2t_1^{2H_1}t_2^{H_2}s_2^{H_2}R\left(0,\ln \frac{t_2}{s_2}\right)-2t_1^{H_1}s_1^{H_1}t_2^{2H_2}R\left(\ln \frac {t_1}{s_1},0\right)
-2t_1^{H_1}s_1^{H_1}s_2^{2H_2}R\left(\ln \frac {t_1}{s_1},0\right)\]
\[-2s_1^{2H_1}t_2^{H_2}s_2^{H_2}R\left(0,\ln \frac{t_2}{s_2}\right)
+2t_1^{H_1}s_1^{H_1}t_2^{H_2}s_2^{H_2}\left(R\left(\ln \frac {t_1}{s_1},\ln \frac{t_2}{s_2}\right)+R\left(\ln \frac{t_1}{s_1},-\ln \frac{t_2}{s_2}\right)\right)\]
\[=t_1^{2H_1}t_2^{2H_2}+t_1^{2H_1}s_2^{2H_2}+s_1^{2H_1}t_2^{2H_2}+s_1^{2H_1}s_2^{2H_2}-(t_1^{2H_1}+s_1^{2H_1})t_2^{H_2}s_2^{H_2}F_{H_2}\left(\ln \frac{t_2}{s_2}\right)\]
\[-t_1^{H_1}s_1^{H_1}(t_2^{2H_2}+s_2^{2H_2})F_{H_1}\left(\ln \frac {t_1}{s_1}\right)+t_1^{H_1}s_1^{H_1}t_2^{H_2}s_2^{H_2}F_{H_1}\left(\ln \frac {t_1}{s_1}\right)F_{H_2}\left(\ln \frac{t_2}{s_2}\right).\]

Therefore, from  \eqref{fh} we have
\[ \E\left(\Delta_{\mathbf{s}} X(\mathbf{t})\right)^2=
t_1^{2H_1}t_2^{2H_2}+t_1^{2H_1}s_2^{2H_2}+s_1^{2H_1}t_2^{2H_2}+s_1^{2H_1}s_2^{2H_2}\]
\[-(t_1^{2H_1}+s_1^{2H_1})\left(t_2^{2H_2}+s_2^{2H_2}-\left|t_2-s_2\right|^{2H_2}\right)-(t_2^{2H_2}+s_2^{2H_2})\left(t_1^{2H_2}+s_1^{2H_2}-\left|t_1-s_1\right|^{2H_1}\right)\]
\[+\left(t_1^{2H_1}+s_1^{2H_1}-\left|t_1-s_1\right|^{2H_1}\right)\left(t_2^{2H_2}+s_2^{2H_2}-\left|t_2-s_2\right|^{2H_2}\right)=|t_1-s_1|^{2
H_1}|t_2-s_2|^{2
H_2}.\]

The proposition is proved.
\end{proof}
\section{Theorem for Covariance Function}
\begin{lemma}
Suppose that there exists a covariance function $R:\R^2\rightarrow \R$  such that
	
  \begin{itemize}

  \item[$(i)$] $R$ does not coincide identically with $R_0$.
   \item[$(ii)$]
\begin{equation}\label{sym}\forall \mathbf{v}\in \R^2: R(\mathbf{v})+R(v_1,-v_2)=2R_0(\mathbf{v}).
\end{equation}
Then there exists  Gaussian self-similar random field $\{X(\mathbf{t}),\mathbf{t}\in \R_+^2\}$ with stationary rectangular increments such that
 $\E X(\mathbf{t})X(\mathbf{s})$ does not coincide with\\  $\frac14 \prod_{i=1,2}\left(|t_i|^{2H_i}+|s_i|^{2H_i}-|t_i -
s_i|^{2H_i} \right).$
\end{itemize}
\end{lemma}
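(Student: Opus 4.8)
The plan is to construct the field $X$ directly from the hypothetical covariance function $R$ via the Lamperti representation \eqref{lamp}, and then invoke the machinery already developed in Section~3. First I would set $Y=\{Y(\mathbf{t}),\mathbf{t}\in\R^2\}$ to be the centered Gaussian stationary field whose covariance function is the given $R$; such a field exists precisely because $R$ is assumed to be a (legitimate) covariance function, so it is positive semidefinite, and Kolmogorov's extension theorem produces the Gaussian field. Then I would define
\[
X(\mathbf{t}) = t_1^{H_1}t_2^{H_2}\, Y(\ln t_1,\ln t_2),\quad \mathbf{t}\in\R_+^2,
\]
with the usual convention handling the coordinate axes (where $t_i=0$ one sets $X=0$, consistent with $X(s,0)=X(0,s)=0$). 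By construction $X$ is Gaussian and centered.

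Next I would verify the three required properties in turn, each of which is essentially a citation of an earlier result. Self-similarity of $X$ with index $\mathbf{H}$ is immediate from the scaling form of \eqref{lamp}, exactly as noted at the start of the proof of Proposition~\ref{rem1}. For the stationary rectangular increments, the hypothesis \eqref{sym} is exactly condition \eqref{r1} of Proposition~\ref{rem1} (since $2R_0(\mathbf{v}) = \tfrac12 F_{H_1}(v_1)F_{H_2}(v_2)$ by definition of $R_0$), so Proposition~\ref{rem1} applies and gives
\[
\E\bigl(\Delta_{\mathbf{s}}X(\mathbf{t})\bigr)^2 = (t_1-s_1)^{2H_1}(t_2-s_2)^{2H_2} = \E\bigl(\Delta_{0,0}X(\mathbf{t}-\mathbf{s})\bigr)^2
\]
for $0\le s_i\le t_i$. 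Since the increment field $\{\Delta_{\mathbf{u}}X(\mathbf{u}+\mathbf{h})\}$ is Gaussian and centered, equality of second moments of $\Delta_{\mathbf{u}}X(\mathbf{u}+\mathbf{h})$ and $\Delta_{0,0}X(\mathbf{h})$ for each $\mathbf{h}$ is not by itself enough — one needs equality of the full covariance structure of the increment process. I would argue that the covariance $\E[\Delta_{\mathbf{u}}X(\mathbf{u}+\mathbf{h})\,\Delta_{\mathbf{u}}X(\mathbf{u}+\mathbf{k})]$ depends only on $\mathbf{h},\mathbf{k}$ and not on $\mathbf{u}$, by polarization from the second-moment identity: writing the cross moment as $\tfrac12(\E(\Delta X(\mathbf h))^2 + \E(\Delta X(\mathbf k))^2 - \E(\Delta X(\mathbf h)-\Delta X(\mathbf k))^2)$ and expressing each term through quantities of the form appearing in Proposition~\ref{rem1}, or more directly by noting that translation invariance of $Y$ plus the scaling of \eqref{lamp} forces the increments of $X$ to be translation invariant in law. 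This is the step I expect to be the main obstacle: making the passage from ``the variance of a single rectangular increment is translation invariant'' to ``the whole increment field is translation invariant in distribution'' airtight, rather than just quoting Proposition~\ref{rem1} verbatim.

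Finally, for the last claim I would compute $\E[X(\mathbf{t})X(\mathbf{s})]$ from \eqref{eq_2}, namely $\E[X(\mathbf t)X(\mathbf s)] = \prod_i t_i^{H_i}s_i^{H_i} \cdot R(\ln(t_1/s_1),\ln(t_2/s_2))$, and compare with the fractional-Brownian-sheet covariance $\tfrac14\prod_i(t_i^{2H_i}+s_i^{2H_i}-|t_i-s_i|^{2H_i})$. Using the change of variables $v_i = \ln(t_i/s_i)$ (so $t_i/s_i = e^{v_i/2}\cdot(t_i s_i)^{1/2}/\dots$, more cleanly: factor out $\prod_i (t_i s_i)^{H_i}$), the sheet covariance equals $\prod_i t_i^{H_i}s_i^{H_i}\cdot R_0(v_1,v_2)$ after the same arithmetic performed at the end of the proof of Proposition~\ref{prop1}. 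Hence $\E[X(\mathbf t)X(\mathbf s)]$ coincides with the sheet covariance for all $\mathbf t,\mathbf s$ if and only if $R(\mathbf v)=R_0(\mathbf v)$ for all $\mathbf v$, which is excluded by hypothesis $(i)$. Therefore $X$ is not a fractional Brownian sheet, and by the uniqueness of Gaussian laws from covariance functions it is genuinely a new field. This completes the construction; essentially the lemma repackages Propositions~\ref{prop1} and~\ref{rem1} into an explicit existence statement, leaving the existence of a non-$R_0$ admissible $R$ to Section~4.
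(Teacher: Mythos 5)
Your construction is exactly the paper's proof: realize a centered stationary Gaussian field $Y$ with covariance $R$, set $X(\mathbf{t})=t_1^{H_1}t_2^{H_2}Y(\ln t_1,\ln t_2)$ as in \eqref{lamp}, get self-similarity from the Lamperti form, get the increment-variance identity from \eqref{sym} via Proposition \ref{rem1}, and conclude non-coincidence of $\E[X(\mathbf{t})X(\mathbf{s})]$ with the sheet covariance from \eqref{eq_2} together with hypothesis $(i)$. The step you single out as the main obstacle is resolved in the paper by precisely the move you were reluctant to make: the increments are centered Gaussian, their variances agree by Proposition \ref{rem1}, hence (it is concluded) the increments are stationary. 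In other words, the paper only checks the one-dimensional distributions of single rectangular increments and does not address the multi-point laws of the increment field at all, so relative to the paper you have missed nothing.

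Be aware, however, that your two suggested repairs would not close the stronger, process-level statement, and it cannot be closed from \eqref{sym} alone. Polarization fails because $\Delta_{\mathbf{u}}X(\mathbf{u}+\mathbf{h})-\Delta_{\mathbf{u}}X(\mathbf{u}+\mathbf{k})$ is an L-shaped increment, not a rectangular increment with corner $\mathbf{u}$, so Proposition \ref{rem1} says nothing about its variance; and stationarity of $Y$ translates into invariance of the law of $X$ under coordinatewise scaling, not under additive shifts of the increments. Concretely, write $R=R_0+D$: condition \eqref{sym} (together with evenness of $R$) forces $D$ to be odd in each variable but otherwise leaves it free, and while the $D$-part cancels from every variance $\E(\Delta_{\mathbf{u}}X(\mathbf{u}+\mathbf{h}))^2$, it does not cancel from the cross-moments $\E[\Delta_{\mathbf{u}}X(\mathbf{u}+\mathbf{h})\Delta_{\mathbf{u}}X(\mathbf{u}+\mathbf{k})]$ with $\mathbf{h}\neq\mathbf{k}$, whose $D$-contribution in general depends on $\mathbf{u}$ (for a factorized odd $D$, such as the one coming from the paper's $R_\theta$, the per-coordinate contribution is the alternating corner sum of $\psi(a,b)=(ab)^{H}D_1(\ln(a/b))$, and already at $H=\tfrac12$, where $\psi(a,b)=b(a-b)/(2a)$, this sum varies with $u$). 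So what the construction actually delivers — and all the paper verifies — is that each single increment $\Delta_{\mathbf{u}}X(\mathbf{u}+\mathbf{h})$ has the same centered Gaussian law as $\Delta_{0,0}X(\mathbf{h})$; if "stationary rectangular increments" is read as equality of the finite-dimensional distributions of the increment fields, as in the paper's definition, neither your argument nor the paper's establishes it. The honest fix is to state and use the single-increment (one-dimensional) version of stationarity, not to attempt polarization.
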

   \begin{proof}
   The finite dimensional distributions of Gaussian fields are uniquely determined  by its mean and covariance functions.
   So there exists probability space and zero mean strictly stationary Gaussian random field $\{Y(\mathbf{t}),{\mathbf{t}\in \R^2}\}$ with covariance function $R$.
    We can define a centered Gaussian random field $\{X(\mathbf{t}),{\mathbf{t}\in \R_+^2}\}$ as
    $X(\mathbf{t})=t_1^{H_1}t_1^{H_2}Y(\ln t_1,\ln t_2)$. The rectangular increments of $X$ have zero mean Gaussian distribution. Therefore, it follows from condition (\ref{sym}) and Proposition \ref{rem1} that variance of $\Delta_{(s_1,s_2)} X(t_1,t_2)$ is equal to the variance of $\Delta_{(0,0)} X(t_1-s_1,t_2-s_2)$. Hence, $X$ has stationary rectangular increments. By Proposition 2.1.1. of \cite{taqqu} we have that $X$ is a self-similar field with index $\mathbf{H}=(H_1,H_2).$
    Proof follows from condition $(i)$.
   \end{proof}
\begin{theorem}
Let $R_\theta:\R^2\rightarrow \R^2$ be a function defined by the formula
\begin{equation}
 {R}_\theta(\mathbf{v})=\frac{1}{4}F_{H_1}(v_1)F_{H_2}(v_2)\left(1+\theta e^{-H_1 |v_1|-H_2|v_2|}\sinh \left(H_1 v_1\right)\sinh \left(H_2 v_2\right)\right),
\end{equation}
where $0<H_1<1,~0<H_2<1,$ $\theta\in\R$ be some number. Then
\begin{enumerate}
\item[$(i)$] $\forall \mathbf{v}\in\R^2:  {R}_\theta(\mathbf{v})=R_\theta(-\mathbf{v});$
\item[$(ii)$] $ {R}_\theta$ does not coincide identically with $R_0;$ 
\item[$(iii)$] $\forall \mathbf{v}\in\R^2: {R}_\theta(\mathbf{v})+ {R}_\theta(v_1,-v_2)= {R}_\theta(\mathbf{v})+ {R}_\theta(-v_1,v_2)=2R_0(\mathbf{v});$
\item[$(iv)$] for any $0<H_1<1,~0<H_2<1$ there exists such $\theta\in\R$ that $ {R}_\theta (\mathbf{u}-\mathbf{v})$ is a positive definite function on $\R^4$.
\end{enumerate}
\end{theorem}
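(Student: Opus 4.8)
The plan is to verify the four claims in order, since (i)--(iii) are elementary algebraic identities and (iv) is the substantive one. For (i), I would note that $F_{H_i}$ is an even function (both $\cosh$ and $|\sinh(\cdot/2)|^{2H_i}$ are even), that $e^{-H_i|v_i|}$ is even, and that the product $\sinh(H_1v_1)\sinh(H_2v_2)$ is invariant under $\mathbf{v}\mapsto-\mathbf{v}$ because both factors change sign; hence each factor of $R_\theta(\mathbf{v})$ is unchanged and $R_\theta(\mathbf{v})=R_\theta(-\mathbf{v})$. For (ii), it suffices to exhibit one point where $R_\theta\neq R_0=\tfrac14 F_{H_1}(v_1)F_{H_2}(v_2)$: pick any $\mathbf{v}$ with $v_1,v_2\neq0$ and $F_{H_1}(v_1)F_{H_2}(v_2)\neq0$; then the correction term $\theta e^{-H_1|v_1|-H_2|v_2|}\sinh(H_1v_1)\sinh(H_2v_2)$ is nonzero as long as $\theta\neq0$, so $R_\theta$ differs from $R_0$ there. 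For (iii), the key observation is that replacing $v_2$ by $-v_2$ leaves $F_{H_2}(v_2)$ and $e^{-H_2|v_2|}$ unchanged but flips the sign of $\sinh(H_2v_2)$; therefore $R_\theta(v_1,-v_2)$ equals $R_0(\mathbf{v})$ times $\bigl(1-\theta e^{-H_1|v_1|-H_2|v_2|}\sinh(H_1v_1)\sinh(H_2v_2)\bigr)$, and adding this to $R_\theta(\mathbf{v})$ the correction terms cancel, leaving $2R_0(\mathbf{v})=\tfrac12 F_{H_1}(v_1)F_{H_2}(v_2)$. The same argument with $v_1\mapsto-v_1$ gives the second equality in (iii).

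For (iv), the plan is to show positive definiteness of $(\mathbf{u},\mathbf{v})\mapsto R_\theta(\mathbf{u}-\mathbf{v})$ by exhibiting it as the covariance of an explicit Gaussian field, or equivalently by checking that the function is the Fourier transform of a nonnegative measure (Bochner's theorem). The natural route is to write $R_\theta = R_0 + \theta\, G$, where $G(\mathbf{v}) = \tfrac14 F_{H_1}(v_1)F_{H_2}(v_2)e^{-H_1|v_1|-H_2|v_2|}\sinh(H_1v_1)\sinh(H_2v_2)$, and observe that $R_0$, being the covariance function of the stationary field associated to the fractional Brownian sheet, is already positive definite. Since positive definiteness is preserved under small perturbations only if one controls the spectral density, I would compute (or at least describe) the Fourier transforms. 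The function $R_0$ factors as a product $\prod_{i} \tfrac14 F_{H_i}(v_i)$, each factor being the Lamperti-stationary covariance of one-dimensional fBm, hence having a known nonnegative spectral density $f_{H_i}$ bounded below by a positive constant on compacts but decaying at infinity; the correction $G$ also factors coordinatewise, $G(\mathbf{v}) = \prod_i g_{H_i}(v_i)$ with $g_{H_i}(v_i) = \tfrac14 F_{H_i}(v_i) e^{-H_i|v_i|}\sinh(H_i v_i)$, an odd, integrable, bounded function whose Fourier transform $\hat g_{H_i}$ is bounded. Then the spectral measure of $R_\theta$ has density $\prod_i f_{H_i}(\xi_i) + \theta \prod_i \hat g_{H_i}(\xi_i)$ (up to constants and convolution structure), and I would argue that for $|\theta|$ small enough this is nonnegative everywhere: on any compact set $f_{H_i}$ is bounded below by a positive constant while $\hat g_{H_i}$ is bounded, and in the tails one needs the decay of $\hat g$ to be at least as fast as that of $f$.

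The main obstacle is precisely controlling the relative decay of the spectral densities at infinity: positive definiteness of $R_\theta$ for small $\theta$ requires that $|\hat g_{H_1}(\xi_1)\hat g_{H_2}(\xi_2)|$ be dominated by $f_{H_1}(\xi_1)f_{H_2}(\xi_2)$ (after suitable normalization) uniformly in $\boldsymbol{\xi}$, not merely on compacts. The spectral density of fractional Brownian motion's Lamperti stationary version decays polynomially like $|\xi|^{-1-2H_i}$, so I expect $\hat g_{H_i}$ to decay at least as fast; establishing this cleanly may require writing $g_{H_i}$ in terms of $F_{H_i}$ and exploiting that $F_{H_i}(v) \sim |v|^{2H_i}$ as $v\to0$ (governing the tail of the transform) together with the exponential decay $F_{H_i}(v)e^{-H_i|v|}|\sinh(H_i v)| = O(1)$ as $|v|\to\infty$ (governing smoothness/integrability near $\xi=0$). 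An alternative that sidesteps delicate Fourier estimates is to recognize $G$ directly as $\pm$ the covariance of a product of independent known stationary fields (so that $R_\theta$ becomes a mixture-type covariance for one sign of $\theta$ and small magnitude), but I anticipate that the clean statement ``there exists $\theta$'' is most transparently proved via the spectral-density domination argument above, with the endpoint behavior of $F_{H_i}$ doing the real work.
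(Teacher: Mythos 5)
Your handling of $(i)$--$(iii)$ is fine and matches the paper, which dismisses these as trivial. For $(iv)$ your overall strategy is also the same as the paper's: pass to the Fourier side (Bochner), use the coordinatewise factorization $R_\theta=R_0+\theta\,g_{H_1}(v_1)g_{H_2}(v_2)$, and argue that for small $|\theta|$ the transform of the correction is dominated by the transform of $R_0$. But the proposal stops exactly where the real work begins: you explicitly flag the uniform tail domination of the sine-transform of $g_{H}$ by the cosine-transform of $F_{H}$ as ``the main obstacle'' and offer only a heuristic ($F_H(v)\sim |v|^{2H}$ at $0$, exponential decay at infinity) for how it might be proved. That domination, in the quantitative form
\begin{equation*}
\int_{\R_+}F_{H}(v)\cos(xv)\,dv \;>\; \frac{\sqrt{|\theta|}}{2}\left|\int_{\R_+}F_{H}(v)\bigl(1-e^{-2Hv}\bigr)\sin(xv)\,dv\right|\qquad\text{for all }x,
\end{equation*}
is essentially the entire content of the paper's proof of $(iv)$: the authors expand $F_H$ in a binomial series to compute both integrals termwise, prove the sine integral $b(x)$ is positive for $x>0$ (using $\sum_{n\ge2}n\bigl|\binom{2H}{n}\bigr|=2H$ for $H>\tfrac12$), handle $0<H\le\tfrac12$ by termwise estimates giving the range $|\theta|<(1-H)^2$, and handle $\tfrac12<H<1$ by evaluating $a(x)$ in closed form through the Beta/Gamma functions, bounding $\Gamma^2(H)/|\Gamma(H+ix)|^2\ge \sinh(\pi x)/(\pi x)$ via the product formula, and matching this against explicit upper bounds for $b(x)$ on $x\ge H$ and $0\le x<H$, which yields an explicit admissible bound on $\sqrt{|\theta|}$. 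Without some version of these estimates your argument does not establish existence of a valid $\theta$: nonnegativity of the spectral density on compacts plus qualitative decay statements is not enough, since both densities vanish at infinity and one needs a uniform pointwise comparison.

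Two further points. The phrase ``up to constants and convolution structure'' is misleading: $R_\theta$ is a sum of two functions each of which factors over coordinates, so its Fourier transform is simply the corresponding sum of products of one-dimensional transforms (with the sine transforms contributing a sign because $g_H$ is odd); no convolution enters. And your proposed shortcut of recognizing $\theta G$ (or $-\theta G$) as itself a covariance, so that $R_\theta$ is a mixture, cannot work: since each $g_{H_i}$ is odd, the transform of $G$ is $-\hat s_{H_1}(\xi_1)\hat s_{H_2}(\xi_2)$ with $\hat s_{H_i}$ odd, and the paper's positivity of $b$ shows this changes sign across quadrants, so neither $G$ nor $-G$ is positive definite. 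Hence the domination argument, with its explicit estimates, is genuinely needed, and it is the missing piece of your proof.
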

\begin{proof}
Statements $(i)-(iii)$ are trivial. So we prove only statement $(iv)$. Recall that any function $f:\R^2_+\rightarrow\R$
 that is a Fourier transform of a positive integrable function, is positive definite. Therefore, to establish that
  $R_\theta$ is a positive definite function on $\R^2$, it is sufficient to prove that  its Fourier inverse transform is a positive function. In this connection, consider this Fourier transform
\begin{equation}
\label{fur}
\int_{\R^2}e^{2i\pi(xv_1+yv_2)}R_\theta(\mathbf{v})d\mathbf{v}.
\end{equation}  
Note that
\[\int_{\R^2}\left|R_\theta(\mathbf{v})\right|d\mathbf{v}\leq \int_{\R^2}R_0(\mathbf{v})\left(1+|\theta| e^{-H_1 |v_1|-H_2|v_2|}|\sinh \left(H_1 v_1\right)||\sinh \left(H_2 v_2\right)|\right)d\mathbf{v} \]
\[= \int_{\R^2}R_0(\mathbf{v})\left(1+\frac{|\theta|}{4} \left(1-e^{-2H_1 |v_1|}\right)\left(1-e^{-2H_2 |v_2|}\right)\right)d\mathbf{v}\leq \left(1+\frac{|\theta|}{4}\right)\int_{\R^2}R_0(\mathbf{v})d\mathbf{v}\]
\[=\left(\frac{1}{4}+\frac{|\theta|}{16}\right)\int_{\R}F_{H_1}(v_1)dv_1\int_{\R}F_{H_2}(v_2)dv_2.\]
Furthermore, for any $0<H<1$ we have
\[\int_{\R}F_H(v)dv=2\int_{\R_+}F_H(v)dv=2\int_{\R_+}\left(e^{Hv}+e^{-Hv}-e^{Hv}\left(1-e^{-v}\right)^{2H}\right)dv\]
\[\leq 2\int_{\R_+}\left(e^{Hv}+e^{-Hv}-e^{Hv}\left(1-e^{-v}\right)^{2}\right)dv= 2\int_{\R_+}\left(e^{-Hv}+2e^{-(1-H)v}-e^{-(2-H)v}\right)dv\]
\[=\frac{2}{H}+\frac{2(3-H)}{(1-H)(2-H)}.\]
It means that Fourier transform from (\ref{fur}) is defined correctly.
Further, taking into account equalities  \[R_\theta(\mathbf{v})+R_\theta(v_1,-v_2)=\frac12F_{H_1}(v_1)F_{H_2}(v_2)\] and 
\[R_\theta(\mathbf{v})-R_\theta(v_1,-v_2)=\frac{1}{2}\theta F_{H_1}(v_1)F_{H_2}(v_2) e^{-H_1 |v_1|-H_2|v_2|}\sinh \left(H_1 v_1\right)\sinh \left(H_2 v_2\right),\] we get the following relations
\[\int_{\R^2}e^{2i\pi(xv_1+yv_2)}R_\theta(\mathbf{v})d\mathbf{v}=\int_{\R_+\times\R}e^{2i\pi(xv_1+yv_2)}R_\theta(\mathbf{v})d\mathbf{v}+\int_{\R_+\times\R}e^{2i\pi (-xv_1-yv_2)}R_\theta(-\mathbf{v})d\mathbf{v}\]
\[=2\int_{\R_+\times\R}\cos\left(2\pi (xv_1+yv_2)\right)R_\theta(\mathbf{v})d\mathbf{v}\]
\[=2\int_{\R_+\times\R}\left(\cos(2\pi xv_1)\cos(2\pi yv_2)-\sin(2\pi xv_1)\sin(2\pi yv_2)\right)R_\theta(\mathbf{v})d\mathbf{v}\]
\[=2\int_{\R_+\times\R_+}\cos(2\pi xv_1)\cos(2\pi yv_2)\left(R_\theta(\mathbf{v})+R_\theta(v_1,-v_2)\right)d\mathbf{v}\]
\[-2\int_{\R_+\times\R_+}\sin(2\pi xv_1)\sin(2\pi yv_2)\left(R_\theta(\mathbf{v})-R_\theta(v_1,-v_2)\right)d\mathbf{v}\]

\[=\int_{\R_+}\cos(2\pi xv_1)F_{H_1}(v_1)dv_1\int_{\R_+}\cos(2\pi yv_2)F_{H_2}(v_2)dv_2\]
\[-\theta\int_{\R^2_+}\sin(2\pi xv_1)\sin(2\pi yv_2)F_{H_1}(v_1)F_{H_2}(v_2)e^{-H_1 |v_1|-H_2|v_2|}\sinh \left(H_1 v_1\right)\sinh \left(H_2 v_2\right)d\mathbf{v}\]
\[=\int_{\R_+}F_{H_1}(v_1)\cos(2\pi xv_1)dv_1\int_{\R_+}F_{H_2}(v_2)\cos(2\pi yv_2)dv_2\]
\begin{equation}
\label{fur_2}
-\frac{\theta}{4}\int_{\R_+}F_{H_1}(v_1)\left(1-e^{-2H_1v_1}\right)\sin(2\pi xv_1)dv_1\int_{\R_+}F_{H_2}(v_2)\left(1-e^{-2H_2v_2}\right)\sin(2\pi yv_2)dv_2.
\end{equation}
Now our aim is to prove that the right hand side of (\ref{fur_2}) is non-negative for all $\mathbf{H} \in (0,1)^2$ and for all $x,y\in \R.$
Note that $F_H(v)$ is positive definite as a covariance function, therefore the integral $\int_{\R_+}F_{H}(v)\cos(2\pi xv)dv$ is positive for any $H\in (0,1)$ and $x\in \R$.

Evidently, it is sufficient to establish that there exist $\theta \in \R$ such that for any $H\in (0,1)$ and $x\in \R$

\begin{equation}
\label{main_ineq}
\int_{\R_+}F_{H}(v)\cos( xv)dv>\frac{\sqrt{|\theta}|}{2}\left|\int_{\R_+}F_{H}(v)\left(1-e^{-2Hv}\right)\sin(xv)dv\right|.
\end{equation}
Denote integrals in the left and right hand sides of inequality (\ref{main_ineq}) as $a(x)$ and $b(x)$ respectively.
It is sufficient to establish (\ref{main_ineq}) when $x>0,$ because $b(x)$ is an odd function and $a(x)$ is an even one.

Recall that for $|x|\leq 1$ \[(1+x)^{2H}=1+\sum_{n=1}^{\infty}\binom{2H}{ n}x^n, \quad \text{and} \quad \sum_{n=1}^{\infty}\binom{2H}{ n}(-1)^{n-1}=1.\]
where binomial coefficients equal
\[\binom{2H}{n}=\frac{2H}{1}\frac{(2H-1)}{2}\frac{(2H-2)}{3}\frac{(2H-3)}{4}\cdots\frac{(2H-n+1)}{n}.\]
It's obvious that $(2H-k)<0$ for $k\ge 2.$ So the binomial coefficients have the following properties
\[\binom{2H}{n}=(-1)^{n-1}\left|\binom{2H}{n}\right|,\quad 0<H\leq\frac12, n\geq 1.\]
\[\binom{2H}{n}=(-1)^{n-2}\left|\binom{2H}{n}\right|,\quad \frac12\leq H<1,n\geq 2.\]

Then we expand function $F_H$ as
\[F_H(v)=e^{H v}+e^{-H v}-e^{H v} \left(1-e^{-v}\right)^{2H}=e^{H v}+e^{-H v}-e^{H v} \left(1+\sum_{n=1}^{\infty}\binom{2H}{ n} (-1)^n e^{-nv}\right)\]
\[=e^{-H v}- \sum_{n=1}^{\infty}\binom{2H}{ n} (-1)^n e^{-(n-H)v}.\]
And
\[F_H(v)(1-e^{- 2H v})=e^{-H v}-e^{-3 H v}- \sum_{n=1}^{\infty}\binom{2H}{ n} (-1)^n e^{-(n-H)v}+\sum_{n=1}^{\infty}\binom{2H}{ n} (-1)^n e^{-(n+H)v}.\]

For the sequences of functions  $c_n(v)=\binom{2H}{ n} (-1)^n e^{-n v}\cos(xv)dv,n\geq 1$ and\\ $s_n(v)=\binom{2H}{ n} (-1)^n e^{-(n-H)v}\sin(xv)dv,n\geq 1$ the series $\sum_{n=1}^{\infty}c_n(v)$ and $\sum_{n=1}^{\infty}s_n(v)$ converge uniformly, because  $|c_n(v)|\leq\left|\binom{2H} {n}\right|$, $|s_n(v)|\leq\left|\binom{2H}{ n}\right|$ and $\sum_{n=1}^{\infty}\left|\binom{2H}{ n}\right|<+\infty$ (Weierstrass M-test).  
The uniform convergence implies that 
\[a(x)=\int_{\R_+}F_{H}(v)\cos(xv)dv=\int_{\R_+}e^{-H v}\cos(xv)dv \]
\[-\sum_{n=1}^{\infty}\binom{2H}{ n} (-1)^n \int_{\R_+}e^{-(n-H)v}\cos(xv)dv,\]
and
\[b(x)=\int_{\R_+}F_{H}(v)\left(1-e^{-2Hv}\right)\sin( xv)dv=\int_{\R_+}e^{-H v}\sin(xv)dv-\int_{\R_+}e^{-3 H v}\sin(xv)dv \]
\[-\sum_{n=1}^{\infty}\binom{2H}{ n} (-1)^n \int_{\R_+}e^{-(n-H)v}\sin(xv)dv+\sum_{n=1}^{\infty}\binom{2H}{ n} (-1)^n\int_{\R_+} e^{-(n+H)v}\sin(xv)dv.\]

Recall that in the case $\alpha<0$ we have
\[\int_{\R_+} e^{\alpha x}\sin \beta x dx =\frac{\beta}{\alpha^2+\beta^2} \quad \text{and} \quad \int_{\R_+} e^{\alpha x}\cos \beta x dx =\frac{-\alpha}{\alpha^2+\beta^2}.\]
Then
\[a(x)= \frac{H}{H^2+x^2}-\sum_{n=1}^{\infty}\binom{2 H}{n}(-1)^n\frac{n-H}{(n-H)^2+x^2}\]
and
\[b(x)= \frac{2\pi x}{H^2+(2 \pi x)^2}-\frac{2\pi x}{9 H^2+x^2}\]
\[-\sum_{n=1}^{\infty}\binom{2 H}{n}(-1)^n\frac{2\pi x}{(n-H)^2+x^2}+\sum_{n=1}^{\infty}\binom{2 H}{n}(-1)^n\frac{x}{(n+H)^2+x^2}.\]

Now we verify that $b(x)>0,x>0.$ Consider function $b(x)$ for the both cases $0<H\leq\frac12$ and $\frac12<H<1$.

In the case $0<H\leq\frac12$ we have
\[b\left(x\right)=\frac{8 x H^2}{(H^2+x^2)(9 H^2+ x^2)}+\sum_{n=1}^{\infty}\left|\binom{2 H}{n}\right|\frac{4x n H}{((n-H)^2+x^2)((n+H)^2+x^2)}>0,x>0.\]
In the case $\frac12<H<1$ we have
\begin{equation}
\label{bx}
\begin{gathered}
b\left(x\right)=\frac{8 x H^2}{(H^2+x^2)(9 H^2+ x^2)}+2H\frac{4x H}{((1-H)^2+x^2)((1+H)^2+x^2)}\\
-\sum_{n=2}^{\infty}\left|\binom{2 H}{n}\right|\frac{4x n H}{((n-H)^2+x^2)((n+H)^2+x^2)}
\end{gathered}
\end{equation}
\[\geq\frac{8 x H^2}{(H^2+x^2)(9 H^2+ x^2)}+2H\frac{4x H}{((1-H)^2+x^2)((1+H)^2+x^2)}\]
\[-\sum_{n=2}^{\infty}\left|\binom{2 H}{n}\right|\frac{4x n H}{((1-H)^2+x^2)((1+H)^2+x^2)}\]
\[\geq\frac{8 x H^2}{(H^2+x^2)(9 H^2+ x^2)}+\frac{4x H}{((1-H)^2+x^2)((1+H)^2+x^2)}\left(2H-\sum_{n=2}^{\infty}\left|\binom{2 H}{n}\right|n\right).\]
Note that $2H\binom{2H-1}{n-1}=n\binom{2H}{n}, n\geq 2.$
Since $2H-1>0,$ we see that the following series converges when $|x|\leq 1:$
\[(1+x)^{2H-1}=1+\sum_{n=1}^{\infty}\binom{2H-1}{n}x^n=1+\frac{1}{2H}\sum_{n=2}^{\infty}\binom{2H-1}{n-1}x^{n-1}=1+\frac{1}{2H}\sum_{n=2}^{\infty}\binom{2H}{n}nx^{n-1}.\]

Therefore at point $x=-1$ we have
\[0=2H(1-1)^{2H-1}=2H+\sum_{n=2}^{\infty}\binom{2H}{n}n{(-1)}^{n-1}=2H-\sum_{n=2}^{\infty}\left|\binom{2H}{n}\right|n.\]
Hence, we prove that for any $0<H<1$ 
\[b\left(x\right)>\frac{8 x H^2}{(H^2+x^2)(9 H^2+ x^2)}>0,x>0.\]

That's why inequality (\ref{main_ineq}) follows from $a(x)-\frac{\sqrt{|\theta}|}{2} b(x)>0,x>0.$
Consider the last inequality for the cases $0<H\leq\frac12$ and $\frac12<H<1$ separately.

In the case $0<H\leq\frac12$ assume that $0<|\theta|<(1-H)^2.$ Then
\[a(x)-\frac{\sqrt{|\theta|}}{2} b(x)> a(x)-\frac{1-H}{2} b(x)
=\frac{H}{H^2+x^2}\left(1-\frac{8 H(1-H)x}{9 H^2+x^2}\right)\]
\[+\sum_{n=1}^{+\infty}\left|\binom{2H}{n}\right|\frac{n-H}{(n-H)^2+x^2}\left(1-\frac{2nH(1-H) (2\pi x)}{(n-H)((n+H)^2+x^2)}\right).\]
Note that
\[\min_{x\in(0,+\infty)}\left(1-\frac{8 H(1-H) x}{9 H^2+x^2}\right)=\frac{2H+1}{3},\quad x_{min}=3H,\]
and
\[\min_{x\in(0,+\infty)}\left(1-\frac{2nH(1-H)x}{(n-H)((n+H)^2+x^2)}\right)=1-\frac{nH(1-H)}{n^2-H^2}>\frac{1}{1+H},\quad x_{min}=n+H.\]
Therefore,
\[a(x)-\frac{1-H}{2} b(x)\ge\frac{H}{H^2+x^2}\frac{2H+1}{3}+\sum_{n=1}^{+\infty}\left|\binom{2H}{n}\right|\frac{n-H}{(n-H)^2+x^2}\frac{1}{1+H}>0.\]

Hence, for $H\in(0,\frac 12]$ and $|\theta|<(1-H)^2$ the inequality (\ref{main_ineq}) is true.

Consider the case $\frac12<H<1.$ We find  a lower estimate for the function $a(x),x>0$ and the upper estimate for the function $b(x),x>0.$ 

The function $a(x),x>0$ has the following integral representation.
\[a\left(x\right)=\int_{\R_+}F_{H}(v)\cos(xv)dv=\frac{H}{H^2+x^2}+\frac{I(x)+I(-x)}{2}, x \in \R_+.\]
where
\[I(x)=\int_{\R_+}e^{ixv}\left(e^{vH}-e^{vH}\left(1-e^{-v}\right)^{2H}\right)dv, x\in \R.\]
We reduce the integral $I(x)$ to a tabulated form. Namely,
\[I(x)=\int_{\R_+}e^{(H+ix)v}\left(1-\left(1-e^{-v}\right)^{2H}\right)dv\]
\[=\frac{e^{(H+ix)v}}{H+ix}\left.\left(1-\left(1-e^{-v}\right)^{2H}\right)\right|^{+\infty}_{0}+\frac{2H}{H+ix}\int_{\R_+}e^{(H+ix)v}\left(1-e^{-v}\right)^{2H-1}e^{-v}dv\]
\[=-\frac{1}{H+ix}+\frac{2H}{H+ix}B(2H,1-H-ix)=-\frac{1}{H+ix}+\frac{2H}{H+ix}\frac{\Gamma(2H)\Gamma(1-H-ix)}{\Gamma(1+H-ix)},\]
where $\Gamma$ is the gamma function and $B$ is the beta function, defined as $B(z_1,z_2)=\frac{\Gamma(z_1) \Gamma(z_2)}{\Gamma(z_1+z_2)}$, for  $Re(z_1)>0, Re(z_2)>0.$

Recall the basic properties of the gamma function (see \cite{adam}):
\[\Gamma(1+z)=z \Gamma(z), z \in \mathbb{C},\quad \Gamma(1-z)\Gamma(z) = \frac{\pi}{\sin{\pi z}}, z \in \mathbb{C} \quad \mbox{ and } \Gamma(z)\Gamma(\bar{z})=\left|\Gamma(z)\right|^2,z\in \mathbb{C},\] where
\[\sin(u+i v)= \sin u \cosh v + i \cos u \sinh v, \quad u,v \in \R.\]

Applying these properties, we get
\[I(x)=-\frac{1}{H+ix}+\frac{\Gamma(1+2H)}{H^2+x^2}\frac{\Gamma(H+ix)\Gamma(1-H-ix)}{\Gamma(H+ix)\Gamma(H-ix)}\]
\[=-\frac{1}{H+ix}+\frac{\Gamma(1+2H)}{(H^2+x^2)|\Gamma(H+ix)|^2}\left(\frac{\pi}{\sin (\pi (H+ix))}\right)\]
\[=-\frac{1}{H+ix}+\frac{\Gamma(1+2H)}{(H^2+x^2)|\Gamma(H+ix)|^2}\left(\frac{\pi \sin (\pi (H-ix))}{|\sin (\pi (H+ix))|^2}\right)\]
\[=-\frac{1}{H+ix}+\frac{\pi\Gamma(1+2H)}{(H^2+x^2)|\Gamma(H+ix)|^2}\left(\frac{ \sin (\pi H) \cosh (\pi x) - i\cos (\pi H) \sinh (\pi x)}{\sin^2 (\pi H) \cosh^2 (\pi x) + \cos^2 (\pi H) \sinh^2 (\pi x)}\right),x\in \R.\]
Therefore,
\[a\left(x\right)=\frac{H}{H^2+x^2}+\frac{I(x)+I(-x)}{2}=\frac{H}{H^2+x^2}+\frac{1}{2}\left(-\frac{1}{H+ix}-\frac{1}{H-ix}\right)\]
\[+\frac{\pi\Gamma(1+2H)}{(H^2+x^2)|\Gamma(H+ix)|^2}\left(\frac{ \sin (\pi H) \cosh (\pi x)}{\sin^2 (\pi H) \cosh^2 (\pi x) + \cos^2 (\pi H) \sinh^2 (\pi x)}\right)\]

\[=\frac{\pi\Gamma(1+2H)}{(H^2+x^2)|\Gamma(H+ix)|^2}\left(\frac{\sin(\pi H)\cosh(\pi x)}{\cosh^2(\pi x)-\cos(2H\pi)}\right).\]
Using the formula 6.1.25 in \cite{adam} for absolute value of the gamma function we prove
the following inequality.
\[\frac{\Gamma^2(H)}{|\Gamma(H+ix)|^2}=\prod_{n=0}^{+\infty}\left(1+\frac{x}{(n+H)^2}\right)\geq\prod_{n=0}^{+\infty}\left(1+\frac{x}{(n+1)^2}\right)\]
\[=\frac{\Gamma^2(1)}{|\Gamma(1+ix)|^2}=\frac{\sinh (\pi x)}{\pi x}.\]

Therefore, 
\[a(x)\geq \frac{\pi\Gamma(1+2H)}{H^2+x^2}\left(\frac{\sin(\pi H)\cosh(\pi x)}{\cosh^2(\pi x)-\cos(2H\pi)}\right)\frac{\sinh (\pi x)}{\pi x \Gamma^2(H)}\]
\[=\frac{\Gamma(1+2H)\sin(\pi H)}{\Gamma^2(H)(H^2+x^2)x}\tanh(\pi x)\left(\frac{\cosh^2(\pi x)}{\cosh^2(\pi x)-2\cos(2H\pi)}\right)\]
\[\geq\frac{\Gamma(1+2H)\sin(\pi H)}{2\Gamma^2(H)(H^2+x^2)x}\tanh(\pi x).\]
Note that $\frac{\tanh(\pi H)x}{x},x>0$ is a decreasing function and $\tanh(\pi x),x>0$ is an increasing one.
Hence,
\begin{align}
\label{lower_a}
a(x)\geq & \frac{\Gamma(1+2H)\sin(\pi H)}{2\Gamma^2(H)(H^2+x^2)x}\tanh(\pi H),& x \geq H,\\
a(x)\geq & \frac{\Gamma(1+2H)\sin(\pi H)}{2\Gamma^2(H)(H^2+x^2)H}\tanh(\pi H),& 0\leq x<H.
\end{align}

Return to upper estimate for $b(x).$ In the case $\frac12<H<1$ it follows from (\ref{bx}) that
\[b(x)\leq \frac{8 x H^2}{(H^2+x^2)(9 H^2+ x^2)}+\frac{8 x H^2 }{((1-H)^2+x^2)((1+H)^2+x^2)}\]
\[\leq \frac{8 H^2}{H^2+x^2}\left(\frac{x}{9H^2+x^2}+\frac{x}{(1-H)^2+x^2}\right).\]

Note that
\[
\frac{x}{9H^2+x^2}+\frac{x}{(1-H)^2+x^2}\leq \frac{1}{x}+\frac{1}{x}=\frac{2}{x}, x>0, \]
and  for $0\leq x<H$
\[\frac{x}{9H^2+x^2}+\frac{x}{(1-H)^2+x^2}\leq \left(\frac{1}{10H}+\frac{1}{2(1-H)}\right)=\frac{1+4H}{10H(1-H)}\leq \frac{1}{2H(1-H)}. \]

Therefore,
\begin{equation}
\label{uper_b_1}
b(x)\leq   \frac{16 H^2}{(H^2+x^2)x}, x \geq H,\\
\end{equation}
and
\begin{equation}
\label{uper_b_2}
b(x) \leq \frac{1}{H^2+x^2}\frac{4H}{1-H},  0\leq x<H.
\end{equation}

Thus, from \eqref{lower_a}, \eqref{uper_b_1}, and \eqref{uper_b_2} we get the following inequalities:
\[a(x)-\frac{\sqrt{|\theta|}}{2} b(x)>\frac{\Gamma(1+2H)\sin(\pi H)}{2\Gamma^2(H)(H^2+x^2)x}\tanh(\pi H)-\frac{\sqrt{|\theta|}}{2} \frac{8 H^2}{H^2+x^2}\left(\frac{2}{x}\right)\]
\[=\frac{1}{(H^2+x^2)x}\left(\frac{\Gamma(1+2H)\sin(\pi H)}{2\Gamma^2(H)}\tanh(\pi H)-\sqrt{|\theta|}8 H^2\right),x \geq H,\]
and
\[a(x)-\frac{\sqrt{|\theta|}}{2} b(x)>\frac{\Gamma(1+2H)\sin(\pi H)}{2\Gamma^2(H)(H^2+x^2)H}\tanh(\pi H)-\frac{\sqrt{|\theta|}}{2} \frac{1}{H^2+x^2}\frac{4H}{1-H}\]
\[=\frac{1}{(H^2+x^2)H}\left(\frac{\Gamma(1+2H)\sin(\pi H)}{2\Gamma^2(H)}\tanh(\pi H)-\sqrt{|\theta|}\frac{2H^2}{1-H}\right),0\leq x<H.
\]
Thus, in  the case $\frac{1}{2}<H<1$ we prove that $a(x)-\frac{\sqrt{|\theta|}}{2}b(x)>0,x\geq 0$ for
\begin{equation}
\label{eq:last_theta}
\sqrt{|\theta|}<\frac{\Gamma(1+2H)\sin(\pi H)}{2\Gamma^2(H)}\tanh(\pi H)\frac{1-H}{4H^2}.
\end{equation}
Note that according to the tables of values for gamma function $\Gamma (x)\geq 0.88, x >0.$ Therefore, for $0<H<1$
\[\frac{\Gamma(1+2H)}{ 8 H^2 \Gamma^2(H)}=\frac{\Gamma (H)\Gamma(H+\frac 12) 2^{2H-\frac12}}{ 4 H \sqrt{2\pi}\Gamma^2(H)}=
\frac{\Gamma(H+\frac 12)}{4^{1-H} 2  \sqrt{ \pi}\Gamma(H+1)}\leq \frac{\Gamma(\frac 12)}{4^{1-H} 2 \sqrt{\pi} 0.88}<1.\] 
Recall that $0 \leq \tanh (\pi H)\leq 1, H>0.$ Then the right hand side of \eqref{eq:last_theta} is less then $1-H.$  

Finally, summarising the both cases, we have that the inequality \eqref{main_ineq}  is true if 
\[\sqrt{|\theta|}<\min\left\{1-H,\frac{\Gamma(2H)}{\Gamma^2(H)}\frac{1-H}{4H}\sin(\pi H)\tanh(\pi H)\right\}=\frac{\Gamma(2H)}{\Gamma^2(H)}\frac{1-H}{4H}\sin(\pi H)\tanh(\pi H).\]

Theorem is proved.
\end{proof}

\end{document}